\documentclass[11pt,psamsfonts]{amsart}
\usepackage{amsmath}
\usepackage{amsthm}
\usepackage{amssymb}
\usepackage{amscd}
\usepackage{amsfonts}
\usepackage{amsbsy}
\usepackage{graphicx}
\usepackage[dvips]{psfrag}
\usepackage{array}
\usepackage{color}
\usepackage{epsfig}
\usepackage{url}
\usepackage{ulem}

\newcolumntype{L}{>{\displaystyle}l}
\newcolumntype{C}{>{\displaystyle}c}
\newcolumntype{R}{>{\displaystyle}r}

\def \fim {{\hfill $\Box$}}
\def \np {\noindent}

\def \g {\gamma}
\def \a {\alpha}
\def \b {\beta}
\def \R {\mathbb{R}}
\def \t {\times}

\def \> {\geq}
\def \< {\leq}

\def \th {\theta}

\def \lb {\lambda}

\def \rt {\rightarrow}
\def \Rl {\mathcal{R}}
\def \ph {\varphi}
\def \Th {\Theta}

\newtheorem {theorem} {Theorem}
\newtheorem {proposition} [theorem] {Proposition}

\newtheorem {lemma} [theorem] {Lemma}
\newtheorem {definition} {Definition}

\newtheorem {remark} {Remark}

\textwidth=14.5truecm

\begin{document}

\title[Functions on a swallowtail]
{Functions on a swallowtail}

\author[A.P. Francisco]
{Alex Paulo Francisco}

%
%
%


\subjclass[2010]{53A05, 57R45, 58K05}

\keywords{Swallowtail, Height functions, Discriminant,
Singularities}

\maketitle

\section*{\bf Abstract}

We classify submersions from $(\R^3,0)$ to $(\R,0)$ up to
diffeomorphisms which preserve the swallowtail and use this
classification to study its flat geometry. The flat geometry is
derived from the contact of the swallowtail with planes, which is
measured by the singularities of the height function.

\smallskip

%


\section{\bf Introduction}


A \textit{swallowtail} is the image of a germ
$g:(\R^2,0)\rightarrow(\R^3,0)$ that is $\mathcal{A}$-equivalent to
$f(x,y) = (x,-4y^3-2xy,3y^4+xy^2)$, that is, there exist germs of
diffeomorphisms $\phi$ and $\psi$ such that $g = \psi\circ
f\circ\phi^{-1}$. We refer to the swallowtail parametrised by $f$ as
the \textit{standard swallowtail} (see Figure \ref{fig:swallow}) and
to the swallowtail parametrised by $g$ as the \textit{geometric
swallowtail}. In \cite{forma normal} is given a normal form of a
geometric swallowtail obtained using changes of coordinates in the
source and isometries in the target.

\begin{figure}[h!]
\centering
\includegraphics[scale = 1.2]{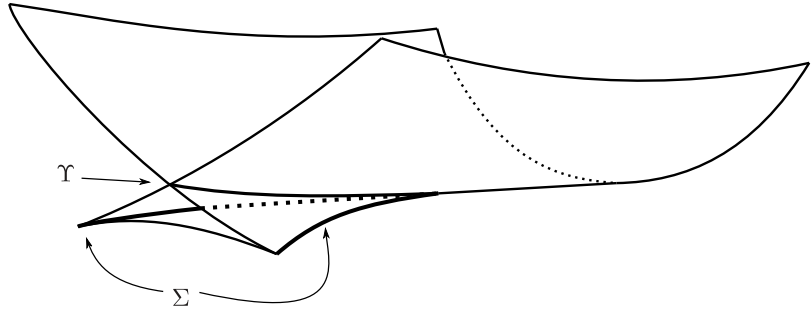}
\caption{\small{The swallowtail, its singular curve $\Sigma$ and its
double point curve $\Upsilon$.}} \label{fig:swallow}
\end{figure}

Swallowtail surfaces arise in a natural way. For instance, the focal
sets, duals and discriminants of curves and surfaces in the
Euclidean space $\R^3$ can have swallowtail singularities (see for
example \cite{Arnold-livro}, \cite{Bruce-Giblin-ex},
\cite{Bruce-Wilkinson}, \cite{S-U-Y}). Hence it is important to
study their differential geometry.

In this paper, we classify germs of submersions
$f:(\R^3,0)\rt(\R,0)$ up to diffeomorphisms in the source which
preserve the swallowtail. We also study the flat geometry of a
swallowtail which is derived from its contact with planes (flat
objects). This contact is measure by the singularities of the height
function on the swallowtail.

This work is part of an ongoing study of the geometry of singular
surfaces from Singularity Theory viewpoint (see for example
\cite{Martin-Yutaro}, \cite{BW}, \cite{Fabio-Farid},
\cite{Fukui-Masaru}, \cite{H-H-N-U-Y}, \cite{H-H-N-S-U-Y},
\cite{H-N-U-Y}, \cite{Juan-Farid}, \cite{Oliver}, \cite{Raul-Farid},
\cite{West} for the cross-cap, \cite{Bruce-Wilkinson},
\cite{K-R-S-U-Y}, \cite{Lu-Juan}, \cite{Lu-kentaro},
\cite{Lu-kentaro2}, \cite{N-U-Y}, \cite{Cuspidal}, \cite{S-U-Y},
\cite{Teramoto}, \cite{Wilkinson} for the cuspidal edge, \cite{forma
normal} for the swallowtail and \cite{cuspidal crosscap} for the
cuspidal cross-cap).

We follow the approach in \cite{BW} : we fix the standard
swallowtail $X = f(\R^2,0)$ and consider its contact with fibres of
submersions. (See \S 3.3 for details)

The paper is organized as follows. In \S 2 we give some concepts and
results on classification of germs of functions on an analytic
variety. In \S 3 we give some properties of the standard swallowtail
and classify submersions from $(\R^3,0)$ to $(\R,0)$ up to changes
of coordinates in the source that preserves the standard
swallowtail, in \S 4 we obtain the discriminants of versal
unfoldings of each normal form obtained in the classification and
analyze the contact between the zero fiber and the standard
swallowtail in each case. We use in \S 5 the classification in \S 3
to study the flat geometry of a geometric swallowtail. We based on
\cite{BW} and follow its approach.

The background material on singularity theory that we refer to the
reader to \cite{Martinet}, \cite{Wall-livro} and on its application
in the differential geometry to \cite{BG-livro}, \cite{Livro-Farid}.

This paper is part of the PhD Thesis work of the author under
supervision of Farid Tari. For more details see \cite{Tese}.

\section{\bf Functions on analytic varieties}

In this section we review some concepts and results from
\cite{Bruce-trans}, \cite{BR}, \cite{BW}, \cite{Damon} and
\cite{Cuspidal} which are useful tools for classifying functions on
analytic varieties.

Let $\mathcal{E}_n$ be the local ring of germs of smooth functions
$(\R^n,0)\rightarrow\R$ and $\mathcal{M}_n$ its unique maximal
ideal.

Let $(X,0)\subset(\R^n,0)$ be a germ of a reduced analytic
subvariety of $\R^n$ at $0$. We say that a germ of diffeomorphism
$\ph:(\R^n,0)\rt(\R^n,0)$ \textit{preserves $X$} if $\ph(X)$ and $X$
are equal as germs at $0$, that is, $(\ph(X),0) = (X,0)$. The group
of such diffeomorphisms is a subgroup of $\Rl$ and is denoted by
$\Rl(X)$.

Given two germs $f,g\in\mathcal{E}_n$, we say that they are
\textit{$\Rl(X)$-equivalents} if there exists a germ of
diffeomorphism $\ph\in\Rl(X)$ such that $g\circ\ph^{-1} = f$.

We denote by $\Th(X)$ the $\mathcal{E}_n$-module of germs of vector
fields tangent to $X$ at $0$. We define $\Th(X)\cdot f = \{\xi \cdot
f \in\mathcal{E}_n \, |\, \xi\in\Th(X) \, , \xi(0) = 0\}$, which is
an $\mathcal{E}_n$-module.

Let $\Th_1(X) = \{\xi\in\Th(X) \, |\, j^1\xi = 0\}$ which is an
$\mathcal{E}_n$-module. If we integrate the vector fields in
$\Th_1(X)$ we obtain a group denoted by $\Rl_1(X)$, which is the set
of germs of diffeomorphisms in $\Rl(X)$ with $1$-jets is the
identity. We also can define the subgroup $\Rl_k(X)$ of germ of
diffeomorphism at $\Rl(X)$ with $k$-jets is the identity. It is a
normal subgroup of $\Rl(X)$ and, consequentially, we can define the
group $\displaystyle\Rl^{(k)}(X) = \frac{\Rl(X)}{\Rl_k(X)}$. The
elements of $\Rl^{(k)}(X)$ are $k$-jets of elements of $\Rl(X)$. The
action of $\Rl(X)$ on $\mathcal{M}_n$ induces an smooth action of
the group $\Rl^{(k)}(X)$ on the $k$-jet space of function germs
$J^{k}(n,1)$.

For $f\in\mathcal{E}_n$ the tangent spaces to the $\Rl(X)$ and
$\Rl_1(X)$-orbits of $f$ are, respectively
$$L\Rl(X)\cdot f = \Th(X)\cdot f \ \ \ \ {\rm and} \ \ \ \ \ \ \ L\Rl_1(X)\cdot f = \Th_1(X)\cdot f.$$

The tools for classifying germs of functions $(\R^n,0)\rt\R$, up to
the $\Rl(X)$-equivalence, are generalizations of the classical
results about the action of $\Rl$ over $\mathcal{E}_n$. The group
$\Rl(X)$ is a Damon's geometric subgroup (\cite{Damon}), so the
theorems of on versal deformations and finite determinacy apply to
this setting.

\begin{definition}
A germ $f:(\R^n,0)\rt(\R,0)$ is \textit{$k-\Rl(X)$-determined} if
every germ of a function with the same $k$-jet as $f$ is
$\Rl(X)$-equivalent to $f$. We say that $f$ is
\textit{$\Rl(X)$-finitely determined} if $f$ is
$k-\Rl(X)$-determined for same $k\in\mathbb{N}^*$.
\end{definition}

\begin{theorem}{\rm (\cite{Damon})}
Consider a germ $f:(\R^n,0)\rt(\R,0)$. If there exists
$k\in\mathbb{N}^*$, such that $$\mathcal{M}_n^k\subset L\Rl(X)\cdot
f,$$ then $f$ is $(k+1)-\Rl(X)$-determined.
\end{theorem}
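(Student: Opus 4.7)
\textbf{The plan} is to use the Mather homotopy (path) method, which is the workhorse behind Damon's finite determinacy theorem for geometric subgroups such as $\Rl(X)$. Take $g\in\mathcal{E}_n$ with $j^{k+1}g=j^{k+1}f$ and set $h:=g-f\in\mathcal{M}_n^{k+2}$. Consider the straight-line family $F_t:=f+th$ for $t\in[0,1]$, and seek a smooth family of germs $\ph_t\in\Rl(X)$ with $\ph_0=\id$ such that $F_t\circ\ph_t=f$ for every $t$; evaluating at $t=1$ then exhibits $f$ and $g$ as $\Rl(X)$-equivalent. Differentiating in $t$ and writing $\xi_t=\dot\ph_t\circ\ph_t^{-1}$ reduces the construction to solving the infinitesimal equation
\begin{equation*}
\xi_t\cdot F_t=-h,\qquad \xi_t\in\Th(X),\ \xi_t(0)=0,
\end{equation*}
smoothly in $t$.

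\textbf{Algebraic core.} The critical step is to establish, for each $t\in[0,1]$, the inclusion $\mathcal{M}_n^k\subset\Th(X)\cdot F_t$. For any $\xi\in\Th(X)$ vanishing at the origin the coefficients of $\xi$ lie in $\mathcal{M}_n$ and the partials of $h$ lie in $\mathcal{M}_n^{k+1}$, so $\xi\cdot h\in\mathcal{M}_n^{k+2}$, whence $\xi\cdot F_t\equiv\xi\cdot f\pmod{\mathcal{M}_n^{k+2}}$. Combined with the hypothesis this gives
\begin{equation*}
\mathcal{M}_n^k\subset\Th(X)\cdot f\subset\Th(X)\cdot F_t+\mathcal{M}_n^{k+2}\subset\Th(X)\cdot F_t+\mathcal{M}_n\cdot\mathcal{M}_n^k.
\end{equation*}
Since $\Th(X)$ is an $\mathcal{E}_n$-module and the condition $\xi(0)=0$ is preserved by multiplication by any element of $\mathcal{E}_n$, the subset $\Th(X)\cdot F_t$ is an $\mathcal{E}_n$-submodule of $\mathcal{M}_n$; as $\mathcal{M}_n^k$ is finitely generated over the local ring $\mathcal{E}_n$, Nakayama's lemma yields $\mathcal{M}_n^k\subset\Th(X)\cdot F_t$. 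In particular $h\in\mathcal{M}_n^{k+2}\subset\Th(X)\cdot F_t$, so the cohomological equation admits a solution at each fixed $t$.

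\textbf{Parametric upgrade, integration, and main obstacle.} To obtain $\xi_t$ smoothly in $t$ I would rerun the same Nakayama argument over the enlarged ring $\mathcal{E}_{n+1}$ in variables $(x,t)$, extending scalars on $\Th(X)$: the hypothesis $\mathcal{M}_n^k\subset\Th(X)\cdot f$ and the estimate $\xi\cdot h\in\mathcal{M}_n^{k+2}$ both transfer verbatim, and this relative statement is exactly the content Damon's geometric-subgroup formalism packages. This parametric upgrade is the only substantive difficulty; pointwise Nakayama is painless, and all the technical weight of the theorem sits in the relative Malgrange-preparation-type argument that guarantees smooth dependence of $\xi_t$ on $t$. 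Once $\xi_t$ is in hand, integrate it as a time-dependent vector field: since $\xi_t(0)=0$ the flow exists uniformly on $[0,1]$ on some neighbourhood of $0$, and tangency of $\xi_t$ to $X$ forces $\ph_t(X)=X$, so $\ph_t\in\Rl(X)$. Setting $t=1$ gives $g\circ\ph_1=f$, completing the proof.
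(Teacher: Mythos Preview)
The paper does not prove this theorem at all: it is stated with a citation to Damon and left without proof, so there is no ``paper's own proof'' to compare against. Your proposal is not a rival argument but essentially a sketch of the standard Mather homotopy proof that underlies Damon's determinacy theorem for geometric subgroups, and as such it is correct in outline.

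A couple of small remarks on the write-up. First, your claim that $\Th(X)\cdot F_t$ is an $\mathcal{E}_n$-submodule relies on the fact that the vector fields in $\Th(X)$ vanishing at the origin form an $\mathcal{E}_n$-submodule of $\Th(X)$; you state this but it is worth emphasising, since $L\Rl(X)\cdot f$ is defined with the condition $\xi(0)=0$ and Nakayama needs the module structure. Second, you correctly flag the parametric (smooth-in-$t$) solution of $\xi_t\cdot F_t=-h$ as the genuine technical content; in practice one either appeals to Damon's machinery directly, or reduces to a finite-dimensional problem by working modulo $\mathcal{M}_n^{k+2}$ (since both $h$ and the relevant tangent space already contain $\mathcal{M}_n^{k+2}$, the equation can be solved at the jet level and then lifted), which avoids invoking relative preparation. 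Either route is acceptable; your sketch is honest about where the work lies.
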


We define the  \textit{extended pseudo-group} of diffeomorphisms
preserving $X$, denoted by $\Rl_e(X)$, as being the pseudo-group
obtained by integrating the vector fields $\xi\in\Th(X)$, but
excluding the condition $\xi(0) = 0$. Hence, for $f\in\mathcal{E}_n$
the extended tangent space to the $\Rl_e(X)$-orbit of $f$ is
$L\Rl_e(X)\cdot f = \{\xi\cdot f\in \mathcal{E}_n \, | \,
\xi\in\Theta(X)\}$.

Note that when $X$ is a swallowtail, every vector field vanish at
the origin, that is, in this case $\Rl_e(X) = \Rl(X)$.

The $\Rl(X)$-classification of germs finitely determined is carried
out inductively on the jet level. The method used here is that of
the complete transversal \cite{Bruce-trans} adapted for the
$\Rl(X)$-action in \cite{BW}.

\begin{theorem} \textbf{Complete Transversal}\label{teo:trans.comp}
Let $f:(\R^n,0)\rt(\R,0)$ be a smooth germ and $\{h_1,...,h_r\}$ a
collection of homogeneous polynomials of degree $k+1$ such that
$$\mathcal{M}_n^{k+1}\subset L\Rl_1(X)\cdot f + \R\cdot\{h_1,...,h_r\} +
\mathcal{M}_n^{k+2}.$$ Then any germ $g:(\R^n,0)\rt(\R,0)$ with
$j^kg(0) = j^kf(0)$ is $\Rl_1(X)$-equivalent to a germ of the form
$$f(x) + \sum_{i=1}^r \lb_ih_i(x) + \ph(x),$$ where
$\ph(x)\in\mathcal{M}_n^{k+2}$ and $\lb_i\in\R$. The real vector
space $T = \R\cdot\{h_1,...,h_r\}$ is called by a \textit{complete
$(k+1)$-transversal of $f$}.
\end{theorem}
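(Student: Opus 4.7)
The plan is to construct a single diffeomorphism $\Phi \in \Rl_1(X)$, realised as the time-$1$ flow of a vector field produced by the hypothesis, and then verify the conclusion via a truncated Lie series. The entire argument reduces to filtration bookkeeping.

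First, I would write any $g$ with $j^k g(0) = j^k f(0)$ as $g = f + p + \psi$, where $p$ is the degree-$(k+1)$ homogeneous component of $g - f$ and $\psi \in \mathcal{M}_n^{k+2}$. Applying the hypothesis to $p \in \mathcal{M}_n^{k+1}$ furnishes $\xi \in \Th_1(X)$ and scalars $\lb_1, \ldots, \lb_r \in \R$ with
$$\xi \cdot f = p - \sum_{i=1}^r \lb_i h_i + \rho, \qquad \rho \in \mathcal{M}_n^{k+2}.$$
Crucially, the right-hand side lies in $\mathcal{M}_n^{k+1}$, so $\xi \cdot f \in \mathcal{M}_n^{k+1}$, even though the coefficients of $\xi$ a priori need only lie in $\mathcal{M}_n^2$. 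This improvement of filtration is the pivot of the whole argument.

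Next, I would let $\Phi$ denote the time-$1$ flow of $\xi$. Since $\Th_1(X)$ is the Lie algebra of $\Rl_1(X)$, we have $\Phi \in \Rl_1(X)$. Formally,
$$g \circ \Phi^{-1} \equiv \sum_{m \geq 0} \frac{(-1)^m}{m!}\, \xi^m \cdot g \pmod{\mathcal{M}_n^{k+2}}.$$
An easy induction will give $\xi^m \cdot f \in \mathcal{M}_n^{k+m}$ for every $m \geq 1$: the case $m=1$ is the observation above, and each further application of $\xi$ contributes a factor in $\mathcal{M}_n^2$ while the derivative drops the filtration by at most one. The same kind of estimate yields $\xi^m \cdot p \in \mathcal{M}_n^{k+m+1}$ and $\xi^m \cdot \psi \in \mathcal{M}_n^{k+m+2}$. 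Consequently only the $m = 0$ and $m = 1$ terms survive modulo $\mathcal{M}_n^{k+2}$, and
$$g \circ \Phi^{-1} \equiv g - \xi \cdot f \equiv f + \sum_{i=1}^r \lb_i h_i \pmod{\mathcal{M}_n^{k+2}}.$$
Defining $\ph := g \circ \Phi^{-1} - f - \sum_i \lb_i h_i$ then gives the required form, with $\ph \in \mathcal{M}_n^{k+2}$.

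The point that will require real care is the filtration estimate $\xi^m \cdot f \in \mathcal{M}_n^{k+m}$. Without exploiting that the hypothesis forces $\xi \cdot f$ itself --- not merely its image in $\mathcal{M}_n^{k+1}/\mathcal{M}_n^{k+2}$ --- to lie in $\mathcal{M}_n^{k+1}$, the Lie expansion would not truncate cleanly, and one would have to iterate or pass to a homotopy-method argument. With the estimate in hand, the proof is essentially a Taylor-style calculation parallel to the classical complete transversal theorem of \cite{Bruce-trans}, transposed to the subgroup $\Rl_1(X)$ of $\Rl$ via the geometric-subgroup formalism of Damon \cite{Damon}.
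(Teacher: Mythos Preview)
The paper does not prove this theorem at all: it is quoted as a standard tool, with attribution to the complete-transversal method of Bruce--Kirk--du Plessis \cite{Bruce-trans} and its adaptation to $\Rl(X)$ in \cite{BW}. There is therefore no in-paper argument to compare your proposal against.

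Taken on its own merits, your sketch is correct and is essentially the standard proof. The key step works exactly as you say: the identity $\xi\cdot f = p - \sum_i \lb_i h_i + \rho$ forces $\xi\cdot f \in \mathcal{M}_n^{k+1}$, and since any $\eta\in\Th_1(X)$ has coefficients in $\mathcal{M}_n^{2}$ it raises the $\mathcal{M}_n$-adic order by at least one, so $\xi^{m}\cdot g \in \mathcal{M}_n^{k+2}$ for every $m\geq 2$ and the Lie expansion collapses to the $m=0,1$ terms modulo $\mathcal{M}_n^{k+2}$. The one point worth making explicit for a fully rigorous write-up is that no convergence of the Lie series is needed: because $\xi$ raises filtration, only finitely many terms contribute to any fixed jet, so the identity $j^{k+1}(g\circ\Phi^{-1}) = j^{k+1}\bigl(\sum_{m=0}^{k}\frac{(-1)^m}{m!}\,\xi^{m}\cdot g\bigr)$ is a purely algebraic fact about jets, independent of smooth versus analytic category.
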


\begin{proposition}\label{corol:det}

\np {\rm{(i)}} A germ $f\in \mathcal{M}_n$ is
$k-\Rl_1(X)$-determined if and only if $$\mathcal{M}_n^{k+1}\subset
L\Rl_1(X)\cdot f + \mathcal{M}_n^{k+2}.$$

\np {\rm{(ii)}} In particular, if every vector field in $\Th(X)$
vanishes at the origin and $$\mathcal{M}_n^{k+1}\subset L\Rl(X)\cdot
f + \mathcal{M}_n^{k+2},$$ then $f$ is $(k+1)-\Rl(X)$-determined.
\end{proposition}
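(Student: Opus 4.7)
The proof of (i) splits naturally into the two implications, while (ii) will follow from (i) by a short Nakayama-type multiplication.

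For the backward direction of (i), the hypothesis $\mathcal{M}_n^{k+1}\subset L\Rl_1(X)\cdot f + \mathcal{M}_n^{k+2}$ makes the empty set a complete $(k+1)$-transversal, and Theorem \ref{teo:trans.comp} yields that any $g$ with $j^kg(0)=j^kf(0)$ is $\Rl_1(X)$-equivalent to $f+\ph_1$ for some $\ph_1\in\mathcal{M}_n^{k+2}$. Because $\Th_1(X)$ is an $\mathcal{E}_n$-module, multiplying the hypothesis by $\mathcal{M}_n$ inductively produces $\mathcal{M}_n^{k+j+1}\subset L\Rl_1(X)\cdot f + \mathcal{M}_n^{k+j+2}$ for every $j\geq 0$; reapplying Theorem \ref{teo:trans.comp} at each jet level gives successive $\Rl_1(X)$-equivalences $f+\ph_j\sim f+\ph_{j+1}$ with $\ph_{j+1}\in\mathcal{M}_n^{k+j+2}$, realised by diffeomorphisms whose jets agree with the identity to ever-increasing order. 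The main obstacle is then to show that the infinite composition of these diffeomorphisms converges in the Whitney $C^\infty$-topology to a smooth germ in $\Rl_1(X)$; rather than argue this by hand, I would invoke Damon's geometric subgroup machinery (the theorem attributed to Damon above), which is engineered for exactly this kind of inverse-limit argument over the non-Noetherian ring $\mathcal{E}_n$.

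For the forward direction of (i), I would pass to jets. The quotient group $\Rl_1^{(k+1)}(X)=\Rl_1(X)/\Rl_{k+1}(X)$ is a finite-dimensional Lie group acting smoothly on $J^{k+1}(n,1)$, so the orbit $\mathcal{O}$ of $j^{k+1}f$ is an immersed submanifold whose tangent space at $j^{k+1}f$ equals the image of $L\Rl_1(X)\cdot f$ in $J^{k+1}(n,1)$. If $f$ is $k$-$\Rl_1(X)$-determined, every $j^{k+1}g$ with $j^kg=j^kf$ lies in $\mathcal{O}$, so $\mathcal{O}$ contains the affine subspace $j^{k+1}f + \mathcal{M}_n^{k+1}/\mathcal{M}_n^{k+2}$; its tangent space at $j^{k+1}f$ therefore contains $\mathcal{M}_n^{k+1}/\mathcal{M}_n^{k+2}$, which is the asserted inclusion.

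For (ii), the plan is to reduce to (i). The assumption that every $\xi\in\Th(X)$ vanishes at $0$ gives $\mathcal{M}_n\cdot\Th(X)\subset\Th_1(X)$: for $g\in\mathcal{M}_n$ and $\xi\in\Th(X)$, the product $g\xi$ lies in $\Th(X)$ (an $\mathcal{E}_n$-module) and has vanishing $1$-jet because both $g$ and $\xi$ vanish at the origin. Multiplying the hypothesis of (ii) by $\mathcal{M}_n$ therefore gives $\mathcal{M}_n^{k+2}\subset L\Rl_1(X)\cdot f + \mathcal{M}_n^{k+3}$, and applying (i) with $k$ replaced by $k+1$ shows that $f$ is $(k+1)$-$\Rl_1(X)$-determined. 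Since $\Rl_1(X)\subset\Rl(X)$, this upgrades to $(k+1)$-$\Rl(X)$-determinacy, as required.
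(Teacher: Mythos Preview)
Your argument is correct and essentially unpacks what the paper simply cites: the paper's own proof consists of one line invoking Theorem~\ref{teo:trans.comp} together with Theorem~2.5 of \cite{BDW}. Your backward implication in (i), your Nakayama-type reduction in (ii), and your appeal to ``Damon's machinery'' for the convergence of the infinite composition are precisely the content of those references (the BDW result handles the unipotent group $\Rl_1(X)$ directly, playing the same role as the Damon theorem you invoke), while your forward implication in (i) is the standard orbit--tangent-space argument underlying such determinacy statements.
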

\begin{proof}
This is a consequence of Theorem \ref{teo:trans.comp} and Theorem
2.5 in \cite{BDW} applied to
our setting.\fim\\
\end{proof}

An \textit{$s$-parameter deformation of $f\in \mathcal{E}_n$} is a
family of germs of functions\linebreak
$F:(\R^n\t\R^s,(0,0))\rightarrow(\R,0)$ such that $F_0(x) = F(x,0) =
f(x)$. An $s$-parameter deformation $F$ is said to be
\textit{$P$-$\mathcal{R}^+(X)$-induced} from an $r$-parameter
deformation $G$ if there exist a germ
$\phi:(\R^n\t\R^s,(0,0))\rightarrow(\R^n\t\R^r,(0,0))$ of the form
$\phi(x,u) = (\varphi(x,u),\psi(u))$ and a germ of a function
$c:(\R^s,0)\rightarrow\R$ such that $F(x,u) = G(\phi(x,u)) + c(u)$.
When $\phi$ is a germ of a diffeomorphism we say that $F$ and $G$
are \textit{$P$-$\Rl^+(X)$-equivalent} (see for example
\cite{BG-livro} for the notion of ($p$)-unfoldings).

We say that a deformation $F$ of $f$ is an \textit{$\Rl^+(X)$-versal
deformation of $f$} if any other deformation of $f$ is
$P$-$\Rl^+(X)$-induced from $F$.

\begin{proposition}{\rm (\cite{BW})}
An $s$-parameter deformation $F$ of a germ of a function $f$ on $X$
is an $\Rl^+(X)$-versal deformation if and only if
$$L\Rl_e(X)\cdot f  + \R\cdot\{1,\dot{F_1}, ..., \dot{F_s}\} = \mathcal{E}_n,$$
where $\dot{F_i} = \frac{\partial F}{\partial u_i}(x,0)$, for $i =
1,...,s$.
\end{proposition}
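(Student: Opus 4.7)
The plan is to deduce this statement as a direct application of the general versal unfolding theorem for Damon's geometric subgroups to the group $\Rl^+(X)$. As recalled earlier in the paper, $\Rl(X)$ is a geometric subgroup in the sense of \cite{Damon}, and adjoining the additive action of $\R$ on function germs (by translation of the target) preserves all of Damon's axioms; hence $\Rl^+(X) = \Rl(X)\ltimes\R$ is itself a geometric subgroup, and the infinitesimal criterion for $\Rl^+(X)$-versality holds.

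First I would compute the extended tangent space to the $\Rl^+_e(X)$-orbit at $f$. Separating the geometric and translational components,
$$L\Rl^+_e(X)\cdot f \;=\; L\Rl_e(X)\cdot f \,+\, \R\cdot 1 \;=\; \Th(X)\cdot f \,+\, \R.$$
Next, Damon's general versality theorem states that an $s$-parameter deformation $F$ of $f$ is $\Rl^+(X)$-versal if and only if the extended tangent space together with the span of the initial speeds $\dot F_1,\dots,\dot F_s$ exhausts $\mathcal{E}_n$, i.e.
$$L\Rl^+_e(X)\cdot f \,+\, \R\cdot\{\dot F_1,\dots,\dot F_s\} \;=\; \mathcal{E}_n.$$
Substituting the previous identity absorbs the constant $1$ into the right-hand term and yields the stated condition $L\Rl_e(X)\cdot f + \R\cdot\{1,\dot F_1,\dots,\dot F_s\} = \mathcal{E}_n$.

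The necessity (only if) is the easy direction: given another deformation $G$ and an inducing datum $F(x,u)=G(\phi(x,u))+c(u)$ with $\phi(x,u)=(\varphi(x,u),\psi(u))$, differentiating in $u_i$ at $u=0$ writes each $\dot F_i$ as an element of $\Th(X)\cdot f + \R\cdot\{\dot G_1,\dots,\dot G_r\} + \R$, so versality forces the displayed equality. The main obstacle is the sufficiency (if) direction, which requires constructing, for any given $G$, a parametrised diffeomorphism $\varphi(\cdot,u)\in\Rl(X)$ and a translation $c(u)$ realising $F$ as $P$-$\Rl^+(X)$-induced from $G$. This is done by a standard path method: one connects $F$ to a trivial deformation by a one-parameter family $F_t$, solves the homological equation $\partial F_t/\partial t = \xi_t\cdot F_t + \dot c_t$ with $\xi_t\in\Th(X)$ using the infinitesimal hypothesis, and integrates the time- and parameter-dependent vector field $\xi_t$. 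The verification that Damon's axioms permit this integration in the $\Rl^+(X)$ setting is precisely what is carried out in \cite{Damon}, and once that is invoked the proof is complete.
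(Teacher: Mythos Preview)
The paper does not supply its own proof of this proposition: it is simply quoted from \cite{BW} (and ultimately from \cite{Damon}) as background, with no argument given beyond the citation. Your sketch is therefore not competing against any argument in the paper; rather, you have filled in what the paper leaves as a black box. The outline you give --- observe that $\Rl(X)$ is a Damon geometric subgroup, that adjoining target translations keeps the axioms intact so that $\Rl^+(X)$ is again geometric, compute $L\Rl^+_e(X)\cdot f = L\Rl_e(X)\cdot f + \R\cdot 1$, and then invoke Damon's infinitesimal versality criterion --- is exactly the standard route and is correct. Your remarks on necessity (differentiate the inducing equation) and sufficiency (path/homotopy method solving the homological equation and integrating a time-dependent vector field tangent to $X$) are accurate summaries of how \cite{Damon} proceeds, though of course the real work in verifying the naturality and tangent-space conditions for $\Rl(X)$ is what \cite{Damon} and \cite{BR} carry out.
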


We define the $\Rl_e^+(X)$-codimension of $f$ as $\displaystyle
cod(f,\Rl_e^+(X)) = dim_{\R}(\frac{\mathcal{M}_n}{L\Rl_e(X)\cdot
f})$. It is the least number of parameters needed to have an
$\Rl^+(X)$-versal deformation of $f$.

Another important tool in the classification is Mather's Lemma.

\begin{lemma}{\rm (\cite{Mather})} \textbf{Mather's Lemma}\label{lem:mather}
Let $\a:G\t M\rt M$ be a smooth action of a Lie group $G$ over a
smooth manifold $M$, and let $V$ be a connected submanifold of $M$.
Then the necessary and sufficient conditions for $V$ been in a
single orbit are the following:
\begin{itemize}
\item[(i)] $T_vV\subset T_v(G.v)$, for every $v\in V$.

\item[(ii)] $dim(T_v(G.v))$ is independent of $v\in V$.
\end{itemize}
\end{lemma}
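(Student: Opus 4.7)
The plan is to prove necessity directly and sufficiency via a constant-rank argument applied to the action map $\Phi\colon G\t V\rt M$, $\Phi(g,v)=g.v$.

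Necessity is straightforward: if $V\subset G.v_0$ then for every $v\in V$ we have $G.v=G.v_0$, so $V\subset G.v$ and $T_vV\subset T_v(G.v)$, establishing (i), while $\dim T_v(G.v)=\dim T_{v_0}(G.v_0)$ is constant on $V$, establishing (ii).

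For sufficiency, I would first show that $\Phi$ has constant rank on $G\t V$. At a point $(g,v)$, varying only the $G$-factor yields the tangent space $T_{g.v}(G.v)$ of dimension $d:=\dim T_v(G.v)$, while varying only the $V$-factor yields $\lb_{g*}(T_vV)$, where $\lb_g\colon w\mapsto g.w$ is a diffeomorphism of $M$. Since $\lb_g$ sends the orbit through $v$ to the orbit through $g.v$, one has $\lb_{g*}(T_v(G.v))=T_{g.v}(G.v)$, so assumption (i) gives $\lb_{g*}(T_vV)\subset T_{g.v}(G.v)$. Hence the image of $d\Phi_{(g,v)}$ is exactly $T_{g.v}(G.v)$, of dimension $d$; by (ii) this is independent of $(g,v)$, and so $\Phi$ has constant rank $d$ on $G\t V$.

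Next, for each $v_0\in V$ the constant rank theorem makes $\Phi$ locally equivalent near $(e,v_0)$ to a linear projection, so its image near $v_0$ is a $d$-dimensional submanifold $N$ of $M$ containing $v_0$. The orbit $G.v_0$ is a $d$-dimensional immersed submanifold through $v_0$ contained in the image (take $v=v_0$), hence coincides with $N$ near $v_0$. Because $V$ also lies in the image (take $g=e$), this forces $V\cap W\subset G.v_0$ for some open neighbourhood $W$ of $v_0$ in $M$, and therefore $V\cap(G.v_0)$ is open in $V$. The sets $\{V\cap(G.v)\}_{v\in V}$ partition $V$ into open subsets, so connectedness of $V$ forces this partition to be trivial and all points of $V$ lie in a single orbit.

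The main delicate point will be the constant rank computation: one must interpret $T_v(G.v)$ as the image of the differential of the orbit map $G\rt M$ at the identity $e$, and accept that orbits need only be immersed, not embedded, submanifolds of $M$---which is enough for the local argument above to go through.
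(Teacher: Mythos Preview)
The paper does not supply a proof of Mather's Lemma; it is stated as a classical result and merely attributed to \cite{Mather}. So there is no ``paper's own proof'' to compare against.

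Your argument is the standard one and is correct. The necessity direction is immediate as you say. For sufficiency, the key point---that the image of $d\Phi_{(g,v)}$ equals $T_{g.v}(G.v)$---is argued cleanly: the $G$-direction already surjects onto $T_{g.v}(G.v)$, and condition (i) together with the equivariance $\lb_{g*}\bigl(T_v(G.v)\bigr)=T_{g.v}(G.v)$ shows the $V$-direction contributes nothing new. Condition (ii) then makes the rank constant, and the constant rank theorem plus connectedness finishes the proof exactly as you outline. Your closing remark about orbits being only immersed is the right caveat; in practice one works with the image of a small neighbourhood of $e$ under the orbit map, which \emph{is} an embedded $d$-dimensional disc and hence open in the local image $N$, so after shrinking the neighbourhood one indeed gets $V\cap W\subset G.v_0$.
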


\section{\bf Classification of functions on a swallowtail}

In this section, we shall use the results in \S 2 to classify smooth
germs of functions from $(\R^3,0) \rightarrow (\R,0)$ up to changes
of coordinates in the source which preserve the standard
swallowtail. Note that when $X$ is a swallowtail, every vector field
vanish at the origin, that is, in this case $\Rl_e(X) = \Rl(X)$.

We consider here $X$ to be the standard swallowtail parametrised by
$f(x,y) = (x,-4y^3-2xy,3y^4+xy^2)$ or with equation
$$16u^4w-4u^3v^2-128u^2w^2+144uv^2w-27v^4+256w^3 = 0.$$
We called this germs \textit{functions on a swallowtail}. Note that
the function $f$ is a parametrisation of the discriminant set of the
$\Rl$-versal deformation $F(t,u_0,u_1,u_2) = t^4 + u_2t^2 + u_1t +
u_0$ of the $A_3$-singularity $t^4$.

\begin{proposition}\label{prop:base} {\rm(\cite{Bruce-disc})}
The $\mathcal{E}_3$-module of germs at the origin of vector fields
in $\R^3$ tangents to the standard swallowtail is generated by the
vector fields $\th_1, \th_2$ and $\th_3$ with
\begin{itemize}
\item[] $\displaystyle\theta_1 = 2u\frac{\partial}{\partial u} + 3v\frac{\partial}{\partial
v} + 4w\frac{\partial}{\partial w}$,

\item[] $\displaystyle\theta_2 = 6v\frac{\partial}{\partial u} + (8w-2u^2)\frac{\partial}{\partial
v} -uv\frac{\partial}{\partial w}$,

\item[] $\displaystyle\theta_3 = (16w-4u^2)\frac{\partial}{\partial u} -8uv\frac{\partial}{\partial
v} -3v^2\frac{\partial}{\partial w}$.
\end{itemize}
\end{proposition}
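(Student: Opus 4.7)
The plan is to apply Saito's criterion for free divisors. Write $X=V(h)$ where
$$h(u,v,w)=16u^4w-4u^3v^2-128u^2w^2+144uv^2w-27v^4+256w^3$$
is the defining polynomial of the swallowtail. A vector field $\xi=a\partial_u+b\partial_v+c\partial_w$ lies in $\Theta(X)$ iff $\xi(h)\in(h)\cdot\mathcal{E}_3$. Saito's criterion asserts that if $\theta_1,\theta_2,\theta_3\in\Theta(X)$ and the coefficient matrix $[\theta_i^j]$ satisfies $\det[\theta_i^j]=c\cdot h$ for some unit $c\in\mathcal{E}_3$, then $\Theta(X)$ is a free $\mathcal{E}_3$-module of rank three with basis $\{\theta_1,\theta_2,\theta_3\}$. (This is consistent with the fact that $X$ is the discriminant of the $\mathcal{R}$-versal deformation of $A_3$, which is known to be a free divisor.)

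First I would verify that each $\theta_i$ is tangent to $X$. The polynomial $h$ is quasi-homogeneous of weighted degree $12$ for the weights $\mathrm{wt}(u,v,w)=(2,3,4)$, so Euler's relation immediately gives $\theta_1(h)=12\,h$. For $\theta_2$ and $\theta_3$, tangency reduces to a direct polynomial computation which produces explicit multipliers $\lambda_2,\lambda_3\in\mathcal{E}_3$ (in fact polynomials) with $\theta_i(h)=\lambda_i h$; the quasi-homogeneity restricts the possible monomials of $\lambda_i$ and keeps the calculation manageable.

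Next I would compute the determinant of the coefficient matrix
$$M=\begin{pmatrix} 2u & 3v & 4w \\ 6v & 8w-2u^2 & -uv \\ 16w-4u^2 & -8uv & -3v^2 \end{pmatrix}.$$
A careful cofactor expansion yields $\det M=-2\,h$. Since $-2$ is a unit in $\mathcal{E}_3$, Saito's criterion applies and $\{\theta_1,\theta_2,\theta_3\}$ freely generates $\Theta(X)$, which is the assertion of the proposition.

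The only real obstacle is the polynomial bookkeeping in the two computations: checking the divisibility $\theta_i(h)\in(h)$ for $i=2,3$ and expanding the $3\times 3$ polynomial determinant without sign or coefficient mistakes. Since both $\det M$ and $h$ are quasi-homogeneous of weighted degree $12$ for the weights $(2,3,4)$, quasi-homogeneity provides a monomial-by-monomial consistency check that makes the bookkeeping much less error-prone.
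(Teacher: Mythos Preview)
Your argument via Saito's criterion is correct, and the key determinant computation $\det M=-2h$ checks out (as does the Euler relation $\theta_1(h)=12h$ from quasi-homogeneity). Note, however, that the paper itself does not supply a proof of this proposition: it is simply quoted from Bruce's paper \cite{Bruce-disc} on functions on discriminants, where the vector fields $\theta_i$ arise as liftable vector fields for the $A_3$ discriminant. So there is no ``paper's own proof'' to compare against. Your route via Saito's criterion is in fact the standard direct verification that these three fields freely generate $\Theta(X)$, and it is entirely consistent with the known fact (Saito, Terao, Bruce) that discriminants of $\mathcal{R}$-versal unfoldings of isolated hypersurface singularities are free divisors. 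The only remark is that you describe the checks $\theta_i(h)\in(h)$ for $i=2,3$ as ``a direct polynomial computation'' without carrying them out; to make the proof complete you should either perform them explicitly (they give $\theta_2(h)=0$ and $\theta_3(h)=0$, since $\theta_2,\theta_3$ have weighted degree $>0$ while $h$ generates a principal ideal with no nontrivial multipliers of positive degree---actually one gets concrete polynomial multiples, so just compute), or invoke the lifting construction from \cite{Bruce-disc} which produces tangent fields a priori.
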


Integrating the linear parts of $\th_1, \th_2, \th_3$ in Proposition
\ref{prop:base}, gives the followings $1$-jets of changes of
coordinate in $\Rl(X)$
\begin{itemize}
\item[] $h_1(u,v,w) = (e^{2\lb}u,e^{3\lb}v,e^{4\lb}w)$,

\item[] $h_2(u,v,w) = (u+3\b v,v + 4\g w,w)$,

\item[] $h_3(u,v,w) = (u+\a w,v,w)$,
\end{itemize}
with $\a,\b,\g,\lb\in \R$.

Consider the $1$-jet $j^1f = au + bv + cw$ of a submersion $f$,
with $a, b$ or $c$ non-zero.\\

%
%


\begin{proposition}\label{prop:class}
The $\Rl^{(1)}(X)$-orbits of submersions $f:(\R^3,0)\rightarrow
(\R,0)$ are $ \pm u, \, v, \, \pm w$.
\end{proposition}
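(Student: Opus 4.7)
Write $j^{1}f=au+bv+cw$ with $(a,b,c)\neq 0$. Since equivalence is $f\mapsto f\circ\ph^{-1}$, a direct computation gives the induced actions of $h_{1}$, $h_{2}$, $h_{3}$ on the coefficient triple as $(a,b,c)\mapsto(ae^{-2\lb},be^{-3\lb},ce^{-4\lb})$, $(a,b,c)\mapsto(a,\,b-3a\b,\,c-4b\g+12a\b\g)$, and $(a,b,c)\mapsto(a,b,c-a\a)$ respectively. To these I would add the involution $\iota:(u,v,w)\mapsto(u,-v,w)$, which lies in $\Rl(X)$ because the defining equation of $X$ is even in $v$ (equivalently, the parametrisation $f(x,y)$ is equivariant under $y\mapsto -y$ and $v\mapsto -v$), and which induces $(a,b,c)\mapsto(a,-b,c)$.

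The argument then splits into three cases. If $a\neq 0$, taking $\b=b/(3a)$ in $h_{2}$ kills $b$; a direct check shows the $\g$-dependent correction $12a\b\g-4b\g$ vanishes identically when $3a\b=b$, so $c$ is left alone at this step. Then $\a=c/a$ in $h_{3}$ removes $c$, and $h_{1}$ scales $a$ to $\pm 1$, producing the orbits $\pm u$. If $a=0$ and $b\neq 0$, $h_{2}$ with $\g=c/(4b)$ kills $c$, $h_{1}$ normalises $|b|=1$, and $\iota$ absorbs the sign to give the single orbit $v$. If $a=b=0$ and $c\neq 0$, only $h_{1}$ acts nontrivially, yielding $\pm w$.

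To see the five candidate representatives are pairwise inequivalent, I would invoke geometric invariants. Any $\ph\in\Rl(X)$ preserves the tangent cone $\{w=0\}$ of $X$ as well as the common tangent line of its two singular strata (the $u$-axis), so its $1$-jet $M$ is upper triangular. The cuspidal edge $\{(-6t^{2},8t^{3},-3t^{4})\}$ has a well-defined half-tangent at the origin pointing into $u\leq 0$, forcing $M_{11}>0$; and since the cuspidal edge lies in $\{w\leq 0\}$ while the self-intersection curve $\{(-2t^{2},0,t^{4})\}$ lies in $\{w\geq 0\}$ near the origin and these are geometrically distinct strata preserved individually by $\ph$, we also get $M_{33}>0$. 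Reading off the action $(a,b,c)\mapsto(a,b,c)\cdot M^{-1}$ then shows that $a\neq 0$, $b\neq 0$ (given $a=0$), the sign of $a$, and the sign of $c$ (given $a=b=0$) are all invariants, while the sign of $b$ is reversed only by $\iota$; pairwise distinctness of $\pm u,\,v,\,\pm w$ follows.

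The main obstacle is precisely this last step. The reduction to normal forms is linear bookkeeping with upper-triangular matrices, but verifying that no element of $\Rl(X)$ can reverse either the $u$- or $w$-direction requires concrete geometric input from the swallowtail and its singular strata, going beyond the infinitesimal generators $\th_{1},\th_{2},\th_{3}$ of Proposition~\ref{prop:base}.
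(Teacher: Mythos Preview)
Your argument is correct and follows the same line as the paper's---reduce $au+bv+cw$ using the $1$-jets $h_1,h_2,h_3$ coming from the tangent vector fields---but the paper compresses everything into the single sentence ``immediately follows considering the $1$-jets of diffeomorphisms in $\Rl(X)$'', whereas you supply two ingredients the paper leaves implicit. First, you correctly add the involution $\iota:(u,v,w)\mapsto(u,-v,w)$, which is genuinely needed to merge $v$ and $-v$: the scaling $h_1$ only multiplies $b$ by $e^{-3\lambda}>0$, so the identity component generated by $h_1,h_2,h_3$ alone cannot change the sign of $b$. Second, you actually prove pairwise distinctness of $\pm u,\,v,\,\pm w$ via the flag (tangential line $\subset$ tangent cone) preserved by $\Rl(X)$ together with the half-space constraints from $\Sigma$ and $\Upsilon$. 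Your caveat about $M_{33}>0$ is well placed but not fatal: comparing the $t^{4}$-coefficients of the $w$-components of $\phi\circ\alpha$ and $\phi\circ\beta$, after first reading off $\sigma'(0)^{2}=\tau'(0)^{2}=M_{11}$ from the $t^{2}$-coefficients of their $u$-components, forces the quadratic $u^{2}$-contribution to $\phi_{3}$ to vanish and yields $M_{33}=M_{11}^{2}>0$. So your proof is a rigorous completion of what the paper asserts without detail.
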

\begin{proof}
The proof immediately follows considering the 1-jets of
diffeomorphisms in $\Rl(X)$. \fim
\end{proof}

Now we investigate each case in Proposition \ref{prop:class}.

\begin{lemma}\label{lem:01}
The germ $g(u,v,w) = \pm u$ is $1-\Rl(X)$-determined and has
$\Rl_e^+(X)$-codimension $0$.
\end{lemma}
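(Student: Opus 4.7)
The plan is to apply Proposition~\ref{corol:det}(ii) directly, using the explicit generators of $\Th(X)$ listed in Proposition~\ref{prop:base} together with the observation (made in the paper) that every vector field tangent to the swallowtail vanishes at the origin, so that $\Rl_e(X)=\Rl(X)$.

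First I would compute $\th_i\cdot g$ for $g=\pm u$ and each generator of $\Th(X)$. Only the coefficient of $\partial/\partial u$ contributes, so the calculation is immediate: $\th_1\cdot g=\pm 2u$, $\th_2\cdot g=\pm 6v$, and $\th_3\cdot g=\pm(16w-4u^2)$.

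Next I would argue that these three elements generate the maximal ideal $\mathcal{M}_3$ as an $\mathcal{E}_3$-module. Indeed $u$ and $v$ already lie in $L\Rl(X)\cdot g$, hence so does $u^2=u\cdot u$, and therefore $w$ can be recovered from the third generator. Thus $\mathcal{M}_3\subseteq L\Rl(X)\cdot g$; the reverse inclusion is automatic because every $\xi\in\Th(X)$ vanishes at the origin, so $\xi\cdot g\in\mathcal{M}_3$. This gives $L\Rl(X)\cdot g=\mathcal{M}_3$.

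The two conclusions now follow at once. From $\mathcal{M}_3\subset L\Rl(X)\cdot g+\mathcal{M}_3^{2}$, Proposition~\ref{corol:det}(ii) with $k=0$ yields that $g$ is $1$-$\Rl(X)$-determined, and the codimension formula gives
$$cod(g,\Rl_e^+(X))=\dim_{\R}\frac{\mathcal{M}_3}{L\Rl_e(X)\cdot g}=\dim_{\R}\frac{\mathcal{M}_3}{\mathcal{M}_3}=0.$$
There is no serious obstacle here; the only point to watch is that the third generator produces $16w-4u^2$ rather than a pure $w$, but the quadratic correction is absorbed automatically since $u$ already lies in $L\Rl(X)\cdot g$.
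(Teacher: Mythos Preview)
Your argument is correct and is exactly the paper's approach, only with the details spelled out: the paper simply records $L\Rl(X)\cdot g=\mathcal{E}_3\cdot\{u,v,4w-u^2\}=\mathcal{M}_3$ and says the result follows. Your explicit computation of $\th_i\cdot g$ and the application of Proposition~\ref{corol:det}(ii) with $k=0$ are precisely what underlies that one line.
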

\begin{proof}
We have
$$L\Rl(X)\cdot g = \mathcal{E}_3\cdot\{u,v,4w-u^2\}  = \mathcal{M}_3$$
and the result follows. \fim
\end{proof}

\begin{lemma}\label{lem:02}
Any $\Rl(X)$-finitely determined germ in $\mathcal{E}_3$ with
$1$-jet $\Rl^{(1)}(X)$-equivalent to $v$ is $\Rl(X)$-equivalent to
$v + au^{k+1}$ for some $k\> 1$ and $a\neq 0$. The germ $v +
au^{k+1}$, $a\neq 0$, is $(k+1)-\Rl(X)$-determined and has
$\Rl_e^+(X)$-codimension $k$.
\end{lemma}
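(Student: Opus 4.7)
The plan is to iterate Theorem \ref{teo:trans.comp} starting from a germ $f$ whose $1$-jet is $\Rl^{(1)}(X)$-equivalent to $v$. From Proposition \ref{prop:base}, $\theta_1\cdot v = 3v$, $\theta_2\cdot v = 8w-2u^2$ and $\theta_3\cdot v = -8uv$, so $L\Rl_1(X)\cdot v = \mathcal{M}_3\langle v,\,4w-u^2,\,uv\rangle$. Multiplying these generators by degree-$k$ monomials one sees, modulo $\mathcal{M}_3^{k+2}$, that every degree-$(k+1)$ monomial involving $v$ appears (from $\mathcal{M}_3^k\cdot v$) and every degree-$(k+1)$ monomial involving $w$ appears (from $\mathcal{M}_3^k\cdot(4w-u^2)$, since the $u^2$-part is pushed into $\mathcal{M}_3^{k+2}$). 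Only $u^{k+1}$ remains, so the complete $(k+1)$-transversal is $\R\{u^{k+1}\}$ for every $k\geq 1$. Iterating, $f$ is $\Rl_1(X)$-equivalent to $v + \sum_{j\geq 1}a_j u^{j+1}$; the germ $v$ itself has infinite $\Rl_e^+(X)$-codimension (its tangent module never contains any pure power $u^m$), so the finite-determinacy hypothesis forces some $a_j\neq 0$, and truncating at the first such $j=k$ gives the candidate normal form $v+au^{k+1}$ with $a\neq 0$.

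To prove $(k+1)$-$\Rl(X)$-determinacy of $g:=v+au^{k+1}$ via Proposition \ref{corol:det}(ii) one must show $\mathcal{M}_3^{k+1}\subset L\Rl(X)\cdot g + \mathcal{M}_3^{k+2}$. Computing
$\theta_1 g = 3v + 2a(k+1)u^{k+1}$,
$\theta_2 g = 8w - 2u^2 + 6a(k+1)u^k v$ and
$\theta_3 g = -8uv + 16a(k+1)u^k w - 4a(k+1)u^{k+2}$,
I would first eliminate the $u^k v$ term in $\theta_2 g$ using $u^k\theta_1 g$ to obtain $\eta_2 := 4w-u^2-2a^2(k+1)^2 u^{2k+1}\in L\Rl(X)\cdot g$, and then combine $\theta_3 g$ with $u\theta_1 g$ and $u^k\eta_2$ to derive a relation $u^{k+2}\bigl(1+\lambda u^{2k-1}\bigr)\in L\Rl(X)\cdot g$ for a nonzero constant $\lambda$ depending on $a$ and $k$. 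Since $k\geq 1$, the factor $1+\lambda u^{2k-1}$ is a unit in $\mathcal{E}_3$, and therefore $u^{k+2}\in L\Rl(X)\cdot g$. With $\eta_1 := v+\frac{2a(k+1)}{3}u^{k+1}$, $\eta_2$ and $u^{k+2}$ in hand together with their $\mathcal{E}_3$-multiples, the required inclusion is then a routine linear-algebra verification inside $\mathcal{M}_3^{k+1}/\mathcal{M}_3^{k+2}$.

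The codimension is read off from the same set of relations. In the quotient $\mathcal{E}_3/L\Rl(X)\cdot g$ one has $v\equiv -\frac{2a(k+1)}{3}u^{k+1}$, $w\equiv \frac{1}{4}u^2$ (after using $u^{k+2}\equiv 0$ to kill the $u^{2k+1}$ correction appearing in $\eta_2$), and $u^{k+2}\equiv 0$, so that the quotient is isomorphic to $\R[u]/(u^{k+2})$; restricting to $\mathcal{M}_3/L\Rl(X)\cdot g$ and counting the resulting monomial basis yields the stated $\Rl_e^+(X)$-codimension. The main technical obstacle of the proof is the chain of cancellations producing the key relation $u^{k+2}(1+\lambda u^{2k-1})\in L\Rl(X)\cdot g$ together with the unit-inversion that eliminates $u^{k+2}$ in the quotient; once that step is established, both the determinacy check and the codimension computation reduce to bookkeeping in a finite-dimensional jet space.
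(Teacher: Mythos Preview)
Your transversal computation and the derivation of $u^{k+2}\in L\Rl(X)\cdot g$ are correct, and so is the identification of the quotient $\mathcal{M}_3/L\Rl(X)\cdot g$ with $(u)/(u^{k+2})$.  The gap is in the determinacy step.  You assert that $(k+1)$-$\Rl(X)$-determinacy follows from Proposition~\ref{corol:det}(ii) once you check
\[
\mathcal{M}_3^{k+1}\subset L\Rl(X)\cdot g + \mathcal{M}_3^{k+2},
\]
but this inclusion is \emph{false}.  Indeed, your own relation $u^{k+2}\in L\Rl(X)\cdot g$ (together with the analogous ones for the remaining degree-$(k+2)$ monomials) shows $\mathcal{M}_3^{k+2}\subset L\Rl(X)\cdot g$, so the displayed inclusion would force $u^{k+1}\in L\Rl(X)\cdot g$.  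That contradicts the quotient computation you carried out: $u^{k+1}$ is a nonzero class in $\mathcal{M}_3/L\Rl(X)\cdot g\cong (u)/(u^{k+2})$.  Concretely, for $k=1$ one checks directly that any expression $p\,\theta_1 g+q\,\theta_2 g+r\,\theta_3 g$ with constant term of $p$ equal to $\tfrac{1}{4a}$ (forced by matching the $u^2$-coefficient) produces an unavoidable $\tfrac{3}{4a}v$ term which cannot be cancelled, so $u^2\notin L\Rl(X)\cdot g$.

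What your argument actually establishes, via $u^{k+2}\in L\Rl(X)\cdot g$, is the inclusion $\mathcal{M}_3^{k+2}\subset L\Rl(X)\cdot g + \mathcal{M}_3^{k+3}$, and hence only $(k+2)$-determinacy from Proposition~\ref{corol:det}(ii).  This is exactly where the paper lands after applying that proposition.  To sharpen to $(k+1)$-determinacy the paper invokes the complete transversal theorem once more together with Mather's Lemma (Lemma~\ref{lem:mather}): one shows that the family $v+au^{k+1}+cu^{k+2}$, $c\in\R$, lies in a single $\Rl(X)$-orbit by verifying the two hypotheses of Mather's Lemma at the $(k+2)$-jet level.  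Your ``routine linear-algebra verification'' cannot replace this step, because the linear algebra in $\mathcal{M}_3^{k+1}/\mathcal{M}_3^{k+2}$ simply does not produce $u^{k+1}$.  (A small side remark: the monomial basis $u,\dots,u^{k+1}$ you obtain has $k+1$ elements, giving $\Rl_e^+(X)$-codimension $k+1$; the value $k$ in the statement refers to the codimension of the stratum, after accounting for the modulus $a$.)
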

\begin{proof}
Observe that the germ $v$ is not $\Rl(X)$-finitely determined. We
prossed by induction on the $k$-jets ($k\> 1$) of germs $g$ with
$1$-jet $v$.

Firstly, we find a complete $(k+1)$-transversal of $g(u,v,w) = v$.

Note that $$L\Rl_1(X)\cdot g = \mathcal{M}_3\cdot\{v,4w-u^2\} =
\mathcal{E}_3\cdot\{uv, v^2, vw, 4uw - u^3, 4w^2 - u^2w\}.$$

Hence, $$\mathcal{M}_3^{(k+1)}\subset L\Rl_1(X)\cdot g + \R\cdot\{
u^{k+1}\} + \mathcal{M}_3^{(k+2)},$$ so $T = \R\cdot\{u^{k+1}\}$ is
a complete $(k+1)$-transversal of $g$. Then, by Theorem
\ref{teo:trans.comp}, any $(k+1)$-jet with $k$-jet equal to $v$ is
$\Rl_1(X)$-equivalent to $v + au^{k+1}, \ a\in \R$.

For $a\neq 0$, using Proposition \ref{corol:det}, we can conclude
that the germ $\overline{g}(u,v,w) = v + au^{k+1}$ is
$(k+2)-\Rl(X)$-determined. However, we can use Theorem
\ref{teo:trans.comp} and Lemma \ref{lem:mather} to conclude that
$\overline{g}$ is, in fact, $(k+1)-\Rl(X)$-determined.

We have $\displaystyle\frac{\mathcal{M}_3}{L\Rl(X)\cdot
\overline{g}} = \R\cdot\{u,u^2,...,u^k, u^{k+1}\}$ which implies
that the $\Rl_e^+(X)$-codimension of $\overline{g}$ is $k+1$ and the
codimension of the stratum of this singularity is $k$.\fim
\end{proof}

\begin{lemma}\label{lem:03}
Any $\Rl(X)$-finitely determined germ in $\mathcal{E}_3$ with
$1$-jet $\Rl(X)$-equivalent to $\pm w$ and $\Rl_e^+(X)$-codimension
$\< 2$ is $\Rl(X)$-equivalent to $\pm w + au^2 + bu^3$, with $a\neq
0, \pm \frac{1}{12}, \pm\frac{1}{4}$ and $b\neq 0$. Furthermore, the
germ $\pm w + au^2 + bu^3$, with $a$ and $b$ in the previous
conditions, is $3-\Rl(X)$-determined and has
$\Rl_e^+(X)$-codimension $2$ (on the stratum).
\end{lemma}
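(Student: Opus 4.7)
My plan is to follow the induction scheme used in Lemma \ref{lem:02}: apply Theorem \ref{teo:trans.comp} and Mather's Lemma \ref{lem:mather} jet by jet, and finish with a finite-determinacy argument via Proposition \ref{corol:det}(ii) that first gives $4$-$\Rl(X)$-determinacy, then tighten to $3$-determinacy by a further application of Theorem \ref{teo:trans.comp} and Mather's Lemma exactly as in the last paragraph of the proof of Lemma \ref{lem:02}. Throughout, the key calculations rest on the three generators $\theta_1,\theta_2,\theta_3$ from Proposition \ref{prop:base}.

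At the 2-jet level, the identities $\theta_1\cdot(\pm w)=\pm 4w$, $\theta_2\cdot(\pm w)=\mp uv$ and $\theta_3\cdot(\pm w)=\mp 3v^2$ give $L\Rl_1(X)\cdot(\pm w)=\mathcal{M}_3\cdot\{w,uv,v^2\}$, which intersects $\mathcal{M}_3^2/\mathcal{M}_3^3$ only in $\mathbb{R}\{uw,vw,w^2\}$. A complete 2-transversal is therefore $\mathbb{R}\{u^2,uv,v^2\}$, and every 2-jet becomes $\pm w+au^2+buv+cv^2$. I then apply Mather's Lemma to the family $V_a=\{\pm w+au^2+buv+cv^2:b,c\in\mathbb{R}\}$ with $a$ fixed: modulo $\mathbb{R}\{uw,vw,w^2\}$ the tangent space to the $\Rl(X)$-orbit, written in the basis $\{u^2,uv,v^2\}$, has the triangular matrix with diagonal $(4a,\,12a\mp 1,\,\mp 3)$. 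This is nonsingular exactly when $a\neq 0,\pm\tfrac{1}{12}$, and under that condition Mather's Lemma collapses $V_a$ to a single orbit represented by $\pm w+au^2$. The same mechanism at degree 3, now with $\theta_1 g=\pm 4w+4au^2$, $\theta_2 g=(12a\mp 1)uv$ and $\theta_3 g=\mp 3v^2+32auw-8au^3$, shows that every degree-3 monomial except $u^3$ already lies in $L\Rl_1(X)\cdot g+\mathcal{M}_3^4$; the obstruction for $u^3$ is structural, because $u^3$ enters $L\Rl_1(X)\cdot g$ only as the degree-3 part of $u\theta_1 g=\pm 4uw+4au^3$, whose degree-2 contribution $\pm 4uw$ cannot be cancelled within $L\Rl_1(X)\cdot g$. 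Hence $\mathbb{R}\{u^3\}$ is a complete 3-transversal and the 3-jet is $\pm w+au^2+bu^3$.

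For the finite-determinacy step I verify $\mathcal{M}_3^4\subset L\Rl(X)\cdot g+\mathcal{M}_3^5$ for $g=\pm w+au^2+bu^3$; Proposition \ref{corol:det}(ii) then gives $4$-$\Rl(X)$-determinacy, and one more round of Theorem \ref{teo:trans.comp} and Mather's Lemma on the 4-jet upgrades this to $3$-determinacy. Using $\theta_1 g=\pm 4w+4au^2+6bu^3$, $\theta_2 g=(12a\mp 1)uv+18bu^2v$ and $\theta_3 g=\mp 3v^2+32auw-8au^3+48bu^2w-12bu^4$, the required inclusion reduces to a rank condition on a finite system in the 15-dimensional space $\mathcal{M}_3^4/\mathcal{M}_3^5$. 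The condition $b\neq 0$ is needed because $u^4$ appears only through $u\theta_1 g$ and the $-12bu^4$ term in $\theta_3 g$, while the excluded values $a=\pm\tfrac{1}{4}$ surface from the interaction of $\theta_3 g$ with $u\theta_1 g$: the combination $\mp\tfrac{1}{3}\theta_3 g+\tfrac{8a}{3}u\theta_1 g$ equals $v^2+\tfrac{8a}{3}(\pm 1+4a)u^3$ plus terms in $\mathcal{M}_3^3$, and its $u^3$-coefficient vanishes precisely at $a=\mp\tfrac{1}{4}$, causing a rank drop at degree 4. The main technical obstacle is the bookkeeping of this rank calculation, since several determinantal conditions have to be checked simultaneously. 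A direct computation of $\mathcal{M}_3/L\Rl(X)\cdot g$ shows it is 4-dimensional with representatives $\{u,v,u^2,u^3\}$; the directions $u^2,u^3$ correspond to the moduli $a,b$, so after quotienting by them the $\Rl_e^+(X)$-codimension on the stratum is $2$, as claimed.
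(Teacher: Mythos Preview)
Your overall strategy matches the paper's, but you misidentify the role of the condition $b\neq 0$. The inclusion $\mathcal{M}_3^4\subset L\Rl_1(X)\cdot\overline{f}+\mathcal{M}_3^5$ for $\overline{f}=\pm w+au^2+bu^3$ holds for \emph{every} $b$ once $a\neq 0,\pm\tfrac{1}{12},\pm\tfrac{1}{4}$. Concretely, working modulo $\mathcal{M}_3^5$ one combines $u\theta_3\overline{f}$, $u^2\theta_1\overline{f}$ and $v\theta_2\overline{f}$ (all in $L\Rl_1(X)\cdot\overline{f}$) to obtain $-8a(1\pm 4a)u^4$, so $u^4$ is captured without any help from $b$; the exceptional value $a=\mp\tfrac{1}{4}$ arises precisely here, not from the degree-$3$ combination you wrote down. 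Thus $\pm w+au^2$ itself (with $b=0$) is already $3$-$\Rl(X)$-determined. The condition $b\neq 0$ enters only in the codimension count: the paper computes $\mathcal{M}_3/L\Rl(X)\cdot\overline{f}=\R\{u,v,u^2,u^3\}$ when $b\neq 0$ but $\R\{u,v,u^2,u^3,v^2\}$ when $b=0$, giving stratum codimension $2$ versus $4$.

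Two smaller points. First, the paper applies Proposition~\ref{corol:det}(i) with $L\Rl_1(X)$ to get $3$-determinacy directly, so your detour through $4$-determinacy via part~(ii) is unnecessary. Second, your Mather's Lemma argument at the $2$-jet level imposes $a\neq\pm\tfrac{1}{12}$ prematurely: the paper instead kills $uv$ by the explicit linear change $h_2$ (needing only $a\neq 0$) and then uses Mather's Lemma just for $v^2$, so that the condition $a\neq\pm\tfrac{1}{12}$ appears only at the $3$-transversal, where the paper records $T=\R\{u^3,u^2v\}$ in that exceptional case. Finally, you should say a word about $a=0$: the paper notes that any finitely determined germ with $2$-jet $\pm w+buv+cv^2$ has stratum codimension $>2$, which is needed to conclude that the list in the lemma is exhaustive.
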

\begin{proof}
For $g(u,v,w) = \pm w$ we have
$$L\Rl(X)\cdot g = \mathcal{E}_3\cdot\{w, uv, v^2\},$$
so $g$ is not $\Rl(X)$-finitely determined. We proceed by induction
on the $k$-jets of germs with $1$-jet $\pm w$.

Note that $$\mathcal{M}_3^2\subset L\Rl_1(X)\cdot g + \R\cdot\{u^2,
uv, v^2\} + \mathcal{M}_3^3,$$ so $T = \R\cdot\{u^2, uv, v^2\}$ is a
complete $2$-transversal of $g$. Hence, any $2$-jet with $1$-jet
equal to $\pm w$ is $\Rl_1(X)$-equivalent to $g(u,v,w) = \pm w +
au^2 + buv + cv^2$, with $a,b,c\in\R$.

When $a\neq 0$, using the linear change of coordinates $h_2$ with
$\g = 0$ and $\b = \frac{-b}{6a}$, we obtain $g(h_2(u,v,w)) = \pm w
+ au^2 + c'v^2$.

We can show, using Mather's Lemma, that $\pm w + au^2 + c'v^2$ is
$\Rl(X)$-equivalent to $\pm w + au^2$.

Consider $f(u,v,w) = \pm w + au^2$, with $a\neq 0$. Then
$$L\Rl(X)\cdot f = \mathcal{E}_3\cdot\{ au^2 \pm w, (12a \mp 1)uv,
32auw - 8au^3 \mp 3v^2\}.$$

A complete $3$-transversal is given by
$$T = \left\{\begin{array}{lll}
        \R\cdot\{u^3\} &  {\rm if} & a\neq \pm \frac{1}{12} \\
        \R\cdot\{u^3,u^2v\} &  {\rm if} & a = \pm \frac{1}{12}
      \end{array}.\right.$$

Therefore, when $a\neq 0,\pm\frac{1}{12}$, any $3$-jet with $2$-jet
equal to $\pm w + au^2$ is $\Rl_1(X)$-equivalent to $\pm w + au^2 +
bu^3$, $b\in\R$.

For $\overline{f}(u,v,w) = \pm w + au^2 + bu^3$, with $a\neq 0,
\pm\frac{1}{12}$, we have
$$\mathcal{M}_3^4\subset L\Rl_1(X)\cdot\overline{f} + \mathcal{M}_3^5,$$
if and only if $a\neq \pm\frac{1}{4}$, that is, by Proposition
\ref{corol:det}, $\overline{f}$ is $3-\Rl(X)$-determined if and only
if $a\neq \pm\frac{1}{4}$.

Furthermore,
$$\frac{\mathcal{M}_3}{L\Rl(X)\cdot\overline{f}} = \left\{\begin{array}{lll}
       \R\cdot\{u,v,u^2,u^3\} &  {\rm if} & b\neq 0 \\
        \R\cdot\{u,v,u^2,u^3,v^2\} &  {\rm if} & b = 0
      \end{array}\right.$$
which implies that the $\Rl_e^+(X)$-codimension of the stratum of
the singularity of $\overline{f}$ is $2$ if $b\neq 0$ and $4$ if $b
= 0$.

When $a = 0$, any $\Rl(X)$-finitely determined germ in
$\mathcal{E}_3$ with 2-jet $\Rl(X)$-equivalent to $\pm w + buv +
cv^2$ has $\Rl_e^+(X)$-codimension $> 2$. \fim
\end{proof}

\begin{theorem}\label{Teo:class}
Let $X$ be the swallowtail parameterised by $f(x,y) =\linebreak
(x,-4y^3-2xy,3y^4+xy^2)$. Denote by $(u,v,w)$ the coordinates in the
target. Then any germ $g:(\R^3,0)\rt(\R,0)$ of an $\Rl(X)$-finitely
determined submersion with $\Rl_e^+(X)$-codimension $\< 2$ of the
stratum in the presence of moduli is $\Rl(X)$-equivalent to one of
the germs in Table \ref{tab:clas}.

\begin{table}[h]
\caption{$\Rl_e^+(X)$-codimension $\< 2$ germs of
submersions.}\label{tab:clas}
\begin{tabular}{|l|l|l|}
  \hline
  \textbf{Normal form} & \textbf{$cod(f,\Rl_e^+(X))$} & \textbf{$\Rl^+(X)$-versal deformation} \\
  \hline
  $\pm u$ & {\rm 0} & $\pm u$ \\
  $v + au^2, a\neq 0$ & {\rm 1} & $v + au^2 + a_1u$ \\
  $v + au^3, a\neq 0$ & {\rm 2} & $v + au^3 + a_1u + a_2u^2$ \\
  $\pm w + au^2 + bu^3, a\neq 0,\pm \frac{1}{12}, \pm \frac{1}{4}; b\neq 0$ & {\rm 2} & $\pm w + au^2 + bu^3 + a_1u + a_2v$ \\
  \hline
\end{tabular}
\end{table}

\end{theorem}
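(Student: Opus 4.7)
The theorem is essentially an assembly of the three case lemmas together with an explicit computation of versal unfoldings, so the plan is to organise the cases and then compute each unfolding.

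First, I would invoke Proposition \ref{prop:class} to split the argument along the $\Rl^{(1)}(X)$-orbit of the $1$-jet of $g$, which must be one of $\pm u$, $v$, or $\pm w$. The case $j^1 g = \pm u$ is immediate from Lemma \ref{lem:01}: we get the normal form $\pm u$ of $\Rl_e^+(X)$-codimension $0$, which already sits on the $0$-stratum of Table \ref{tab:clas}. The case $j^1 g = v$ is handled by Lemma \ref{lem:02}, which asserts that any $\Rl(X)$-finitely determined germ with this $1$-jet is $\Rl(X)$-equivalent to $v + au^{k+1}$ with $k \geq 1$, $a\neq 0$, and has codimension $k$. Restricting to the codimension $\leq 2$ hypothesis leaves only $k=1$ and $k=2$, giving the rows $v+au^2$ and $v+au^3$ in Table \ref{tab:clas}. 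The case $j^1 g = \pm w$ is covered entirely by Lemma \ref{lem:03}, which, under the codimension bound, forces the normal form $\pm w + au^2 + bu^3$ with the moduli conditions $a \neq 0, \pm\tfrac{1}{12}, \pm\tfrac{1}{4}$ and $b\neq 0$.

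It remains to compute an $\Rl^+(X)$-versal deformation for each normal form. Since every vector field in $\Th(X)$ vanishes at the origin (as noted in \S 3), we have $\Rl_e(X) = \Rl(X)$, and by the versality proposition quoted before Mather's Lemma we need to find an $\R$-basis of $\mathcal{M}_3 / L\Rl(X)\cdot f$ and lift it to the parameter directions. For $\pm u$ the quotient vanishes so the constant family is versal. For $v+au^2$ (resp. $v+au^3$), one checks by the same kind of calculation as in the proof of Lemma \ref{lem:02} that $L\Rl(X)\cdot f$ contains all of $\mathcal{M}_3$ except the classes of $u$ (resp.\ $u$ and $u^2$), yielding the displayed unfoldings. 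For $\pm w + au^2 + bu^3$, the computation in Lemma \ref{lem:03} already produced
$$\frac{\mathcal{M}_3}{L\Rl(X)\cdot\overline{f}} = \R\cdot\{u,v,u^2,u^3\},$$
of which $u^2$ and $u^3$ are tangent to the stratum (moduli directions) while $u$ and $v$ generate transverse deformations, giving the unfolding $\pm w + au^2 + bu^3 + a_1 u + a_2 v$.

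The only nontrivial calculational step is the module calculation yielding the quotient bases above; this is routine given the generators $\th_1,\th_2,\th_3$ of $\Th(X)$ from Proposition \ref{prop:base} and has essentially been carried out in the proofs of Lemmas \ref{lem:01}--\ref{lem:03}. The main conceptual point to double-check is that in the $\pm w + au^2 + bu^3$ case the moduli $a,b$ truly contribute to the quotient but not to the codimension of the stratum, which is why the value $2$ (rather than $4$) appears in Table \ref{tab:clas}; this is exactly the observation made at the end of the proof of Lemma \ref{lem:03}.
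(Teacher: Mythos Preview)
Your proposal is correct and follows exactly the same route as the paper, which simply cites Proposition~\ref{prop:class} and Lemmas~\ref{lem:01}--\ref{lem:03}; your additional discussion of the versal unfoldings is a welcome elaboration of what the paper leaves implicit. One small wording slip: for $v+au^{k+1}$ the quotient $\mathcal{M}_3/L\Rl(X)\cdot f$ is $\R\cdot\{u,\dots,u^{k+1}\}$ (as Lemma~\ref{lem:02} states), not just $\R\cdot\{u,\dots,u^k\}$; the top class $u^{k+1}$ is the modulus direction for $a$, exactly as you correctly handled $u^2,u^3$ in the $\pm w$ case.
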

\begin{proof}
The proof follows from Proposition \ref{prop:class} and Lemmas
\ref{lem:01}, \ref{lem:02}, \ref{lem:03}.\fim
\end{proof}

\begin{remark}
The $\mathcal{K}(X)$-classification of germs of submersions
$(\R^3,0)\rt (\R,0)$ of $\mathcal{K}_e(X)$-codimension $\< 2$ can be
obtained from Theorem \ref{Teo:class} by setting $a = \pm 1$.
Furthermore, we observe that if we are interested in the fibers of
these submersions, then both classifications can be used, since the
fibers will be diffeomorphics.
\end{remark}

\section{\bf The geometry of functions on a swallowtail}

The standard swallowtail has equation
$16u^4w-4u^3v^2-128u^2w^2+144uv^2w-27v^4+256w^3 = 0$. By Shafarevich
\cite{Cone tangente}, if $X$ is an irreducible affine variety in
$\R^n$ defined by the ideal $I$ then the equations of the tangent
cone of $X$ are the lowest degree terms of the polynomials in $I$.
Therefore, the tangent cone to the standard swallowtail is the
repeated plane $w^3 = 0$. The tangential line of the standard
swallowtail at the origin is the line with direction $(1,0,0)$
passing through the origin. The germ $f(x,y) =
(x,-4y^3-2xy,3y^4+xy^2)$ is singular along a curve $\Sigma$
parametrised by $\a(t) = f(-6t^2,t) = (-6t^2,8t^3,-3t^4)$.
Furthermore $f$ has a double point curve $\Upsilon$ parametrised by
$\b(t) = f(-2t^2,t) = (-2t^2,0,t^4)$ which ends at the swallowtail
point. See Figure \ref{fig:swallow}.

We study here the discriminants of the singularities given in
Theorem \ref{Teo:class}. Let $g:(\R^3,0)\rt(\R,0)$ be a germ on $X =
f(\R^2,0)$ and $F:(\R^3\t\R^2,(0,0))\rt(\R,0)$ be a deformation of
$g$. We consider the families $G(x,y,a_1,a_2) = F(f(x,y),a_1,a_2)$,
$H_1(t,a_1,a_2) = F(\a(t),a_1,a_2)$ and $H_2(t,a_1,a_2) =
F(\b(t),a_1,a_2)$.

The discriminant of the family $G$ is the set
$$\mathcal{D}_1(F) = \{(a_1,a_2,G(x,y,a_1,a_2))\in \R^2\t\R; \frac{\partial G}{\partial x} = \frac{\partial G}{\partial y} = 0 \ at \ (x,y,a_1,a_2) \},$$
the discriminant of the family $G$ restrict to the singular curve
$\Sigma$ is given by
$$\mathcal{D}_2(F) = \{(a_1,a_2,H_1(t,a_1,a_2))\in \R^2\t\R; \frac{\partial H_1}{\partial t} = 0 \ at \ (t,a_1,a_2) \}$$
and the discriminant of the family $G$ restrict to the double point
curve $\Upsilon$ is the set
$$\mathcal{D}_3(F) = \{(a_1,a_2,H_2(t,a_1,a_2))\in \R^2\t\R; \frac{\partial H_2}{\partial t} = 0 \ at \ (t,a_1,a_2) \}.$$

If $F_1$ and $F_2$ are two $P$-$\Rl^+(X)$-equivalent deformations of
a germ $g$, then it is not difficult to show that the sets
$\mathcal{D}_i(F_1)$ and $\mathcal{D}_i(F_2)$ are diffeomorphics for
$i =1,2,3$. Therefore, it is enough to compute the sets
$\mathcal{D}_i(F)$ for the deformations given in Theorem
\ref{Teo:class}.\\

\np $\bullet$ The case $g(u,v,w) = \pm u$.

In this case, an $\Rl^+(X)$-versal deformation of $g$ is
$F(u,v,w,a_1,a_2) = \pm u$. Then the other families are
$$ G(x,y,a_1,a_2) = \pm x \ \ \ \ \ \ \ \ \ \ H_1(t,a_1,a_2) = \mp 6t^2 \ \ \ \ \ \ \ \ \ \ H_2(t,a_1,a_2) = \mp 2t^2.$$
Hence $\mathcal{D}_1(F)$ is the empty set and $\mathcal{D}_2(F) =
\mathcal{D}_3(F)$ is a plane.

Here, the fiber $g = 0$ is a plane transverse to both the
tangential line and the tangent cone of $X$.\\

\np $\bullet$ The case $g(u,v,w) = v + au^2, a\neq 0$.

In this case, an $\Rl^+(X)$-versal deformation is $F(u,v,w,a_1,a_2)
= v + au^2 + a_1u$. Then the other families are
\begin{itemize}
\item[] $G(x,y,a_1,a_2) = - 4y^3 - 2xy + ax^2 + a_1x,$

\item[] $H_1(t,a_1,a_2) = 8t^3 + 36at^4 - 6a_1t^2,$

\item[] $H_2(t,a_1,a_2) = 4at^4 - 2a_1t^2.$
\end{itemize}
Note that $H_2$ is a versal deformation of the boundary
$B_2$-singularity in the terminology of \cite{Arnold-Bk}. We have
\begin{itemize}
\item[] $\mathcal{D}_1(F) = \{(2y + 12ay^2,a_2,-4y^3 - 36ay^4)\}$,

\item[] $\mathcal{D}_2(F) = \{(a_1,a_2,0)\}\cup\{(2t + 12at^2,a_2,-4t^3 - 36at^4)\}$,

\item[] $\mathcal{D}_3(F) =
\{(a_1,a_2,0)\}\cup\{(4at^2,a_2,-4at^4)\}$.
\end{itemize}
These discriminants are illustrated in the Figure \ref{fig:disc02}.

\begin{figure}[h!]
\centering
\includegraphics[scale = 0.7]{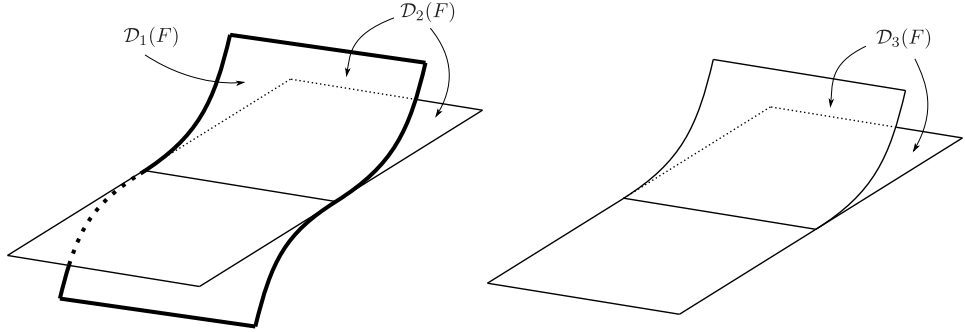}
\caption{\small{The discriminants $\mathcal{D}_2(F)$ and its subset
$\mathcal{D}_1(F)$ in bold (left) and the discriminant
$\mathcal{D}_3(F)$ (right) of $F = v + au^2 + a_1u$.}}
\label{fig:disc02}
\end{figure}

The tangent plane to the fiber $g = 0$ contains the tangential line
and is transverse to the tangent cone of $X$. The contact of the
tangential line with the fiber $g = 0$ is measured by the
singularities of $g(f(x,0)) = ax^2$ and is of type $A_1$.\\

\np $\bullet$ The case $g(u,v,w) = v + au^3, a\neq 0$.

In this case, an $\Rl^+(X)$-versal deformation is $F(u,v,w,a_1,a_2)
= v + au^3 + a_1u + a_2u^2$ and the other families are
\begin{itemize}
\item[] $G(x,y,a_1,a_2) = -4y^3 - 2xy + ax^3 + a_1x + a_2x^2,$

\item[] $H_1(t,a_1,a_2) = 8t^3 - 216at^6 - 6a_1t^2 + 36a_2t^4,$

\item[] $H_2(t,a_1,a_2) = - 8at^6 - 2a_1t^2 + 4a_2t^4.$
\end{itemize}
Note that $H_2$ is a versal deformation of the boundary
$B_3$-singularity in the terminology of \cite{Arnold-Bk}. Hence
\begin{itemize}
\item[] $\mathcal{D}_1(F) = \{(2y - 108ay^4 + 12a_2y^2,a_2,-4y^3 + 432ay^6 - 36a_2y^4)\}$,

\item[] $\mathcal{D}_2(F) = \{(a_1,a_2,0)\}\cup\{(2t - 108at^4 + 12a_2t^2,a_2,-4t^3 + 432at^6 - 36a_2t^4)\}$,

\item[] $\mathcal{D}_3(F) = \{(a_1,a_2,0)\}\cup\{(-12at^4 + 4a_2t^2,a_2,16at^6
-4a_2t^4)\}$.
\end{itemize}
See Figure \ref{fig:disc03}.

The second component of the discriminant $\mathcal{D}_3(F)$ is a
surface which is singular along the set
$\{(0,a_2,0)\}\cup\{(12at^4,6at^2,-8at^6)\}$. The singularity along
$(12at^4,6at^2,-8at^6)$ is a cuspidal edge when $t\neq 0$.

\begin{figure}[h!]
\centering
\includegraphics[scale = 0.7]{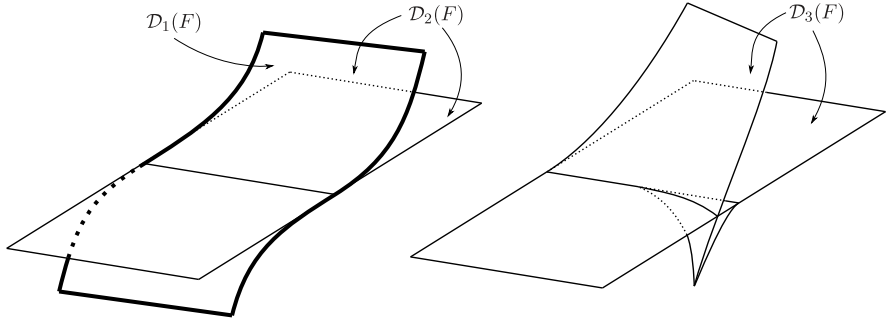}
\caption{\small{The discriminants $\mathcal{D}_2(F)$ and its subset
$\mathcal{D}_1(F)$ in bold (left) and the discriminant
$\mathcal{D}_3(F)$ (right) of $F = v + au^3 + a_1u + a_2u^2$. }}
\label{fig:disc03}
\end{figure}

Here, as in the previous case, the tangent plane to the fiber $g =
0$ contains the tangential line and is transverse to the tangent
cone of $X$. However the contact of the tangential line with the
fiber $g = 0$ is measured by the singularities of $g(f(x,0)) = ax^3$ and is of type $A_2$.\\

\np $\bullet$ The case $g(u,v,w) = \pm w + au^2 + bu^3, \ a\neq 0,
\pm \frac{1}{12}, \pm \frac{1}{4}, \ b\neq 0$.

In this case, an $\Rl^+(X)$-versal deformation is $F(u,v,w,a_1,a_2)
= \pm w + au^2 + bu^3 + a_1u + a_2v$, and the other families are
\begin{itemize}
\item[] $G(x,y,a_1,a_2) = \pm 3y^4 \pm xy^2 + ax^2 + bx^3 + a_1x - 4a_2y^3 - 2a_2xy,$

\item[] $H_1(t,a_1,a_2) = \mp 3t^4 + 36at^4 - 216bt^6 - 6a_1t^2 + 8a_2t^3,$

\item[] $H_2(t,a_1,a_2) = \pm t^4 + 4at^4 - 8bt^6-2a_1t^2.$
\end{itemize}

Therefore, the discriminant $\mathcal{D}_1(F)$ is the union of two
surfaces $S1, S2$, with $S1$ parametrised by
$$(x,y)\mapsto(\pm y^2 - 2ax -3bx^2, \pm y, \mp y^4 - ax^2 -2bx^3)$$
and $S2$ parametrised by
$$(a_2,t)\mapsto(\mp t^2 + 12at^2 - 108bt^4 + 2a_2t,a_2, \pm 3t^4 - 36at^4 + 432bt^6 - 4a_2t^3).$$
The first surface $S1$ is regular and its tangent plane at the
origin is $w=0$. The second surface $S2$ is singular along the curve
parametrised by $(\pm t^2-12at^2+324bt^4,\pm t-12at+216bt^3,\mp
t^4+12at^4-432bt^6)$. Using Corollary 1.5 in \cite{F-S-U-Y} we prove
that $S_2$ is a \textit{cuspidal cross cap} (that is, it is
$\mathcal{A}$-equivalent to the surface parametrised by
$(x,y^2,xy^3)$).

The intersection between these two components $S_1$ and $S_2$ is a
plane curve with a $Z_{17}$-singularity if $a = \mp\frac{1}{18}$
(that is, it is $\mathcal{R}$-equivalent to $x^3y+y^8+\lb xy^6$ for
some $\lb\in \R$) and a $Z_{13}$-singularity otherwise (that is, it
is $\mathcal{R}$-equivalent to $x^3y+y^6+\lb xy^5$ for some $\lb\in
\R$) . Therefore, this intersection is the image by the
parametrisation of the first component of two curves, which are, up
to diffeomorphisms, a line ($y=0$) and the zero-fiber of an $E_{12}$
singularity ($x^3 + y^7 + \delta xy^5 = 0$) if $a = \mp\frac{1}{18}$
and a line ($y=0$) and the zero-fiber of an $E_8$ singularity ($x^3
+ y^5 = 0$) otherwise.

The discriminant $\mathcal{D}_2(F)$ is the union of the plane
$\{(a_1,a_2,0)\}$ and the surface $S_2$ of $\mathcal{D}_1(F)$.

Finally, $\mathcal{D}_3(F) = \{(a_1,a_2,0)\}\cup\{(\pm t^2 + 4at^2 -
12bt^4, a_2, \mp t^4 - 4at^4 + 16bt^6)\}$.

The discriminants $\mathcal{D}_2(F)$ and $\mathcal{D}_3(F)$ are
illustrated in the Figure \ref{fig:disc04}.

\begin{figure}[h!]
\centering
\includegraphics[scale = 0.7]{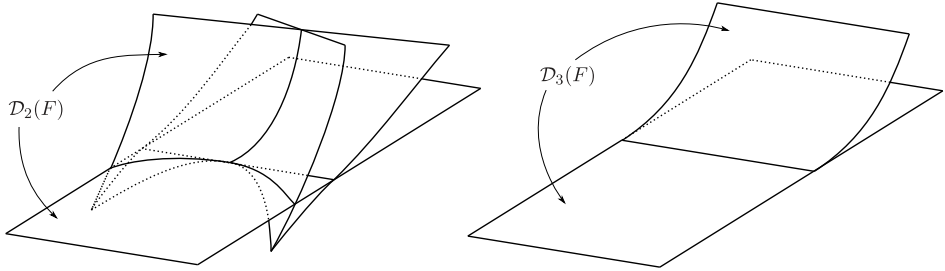}
\caption{\small{The discriminant $\mathcal{D}_2(F)$ (left) and the
discriminant $\mathcal{D}_3(F)$ (right) of $F =\pm w + au^2 + bu^3 +
a_1u + a_2v$. }} \label{fig:disc04}
\end{figure}

The tangent plane to the fiber $g = 0$ coincides with the tangent
cone of the swallowtail at the origin. The contact of the tangential
line with the fiber $g = 0$ is measured by the singularities of
$g(f(x,0)) = ax^2 + bx^3$ and is of type $A_1$.

\section{\bf The flat geometry of a swallowtail}

We use here the classification in \S 3 to study the flat geometry of
a geometric swallowtail $M$. The flat geometry is captured by the
contact of the geometric swallowtail $M$ with planes and is measured
by the singularities of the height function $h_\nu(p) = p\cdot \nu$,
with $\nu\in S^2$ orthogonal to the given plane. Varying $\nu$
locally in $S^2$ gives the family of height functions $H:M\t
S^2\rt\R$, given by $H(p,\nu) = h_\nu(p)$.

Let $g$ be a parametrisation of a geometric swallowtail. Then $g$ is
$\mathcal{A}$-equivalent to $f$ (the parametrisation of the standard
swallowtail). That is, there exist germs of diffeomorphisms
$\phi:(\R^2,0)\rt(\R^2,0)$ and $\psi:(\R^3,0)\rt(\R^3,0)$ such that
$g\circ \phi = \psi\circ f$.

We want to study the contact between the geometric swallowtail
$\psi(X)$ and the plane $h_\nu^{-1}(0)$ for some $\nu\in S^2$. This
contact is measured by the singularities of the function $h_\nu\circ
g:(\R^2,0)\rt (\R,0)$, but these singularities are the same as those
of the function $h_\nu\circ g\circ \phi = h_\nu\circ \psi\circ f$,
which in turn measure the contact between the standard swallowtail
$X = f(\R^2,0)$ and the surface $(h_\nu\circ\psi)^{-1}(0)$.

Note that if there exist another germs of diffeomorphisms $\phi_1$
and $\psi_1$ such that $g\circ \phi_1 = \psi_1\circ f$, then
$h_\nu\circ\psi_1 = h_\nu\circ \psi \circ(\psi^{-1}\circ \psi_1)$
and $\psi^{-1}\circ \psi_1(X) = \psi^{-1}\circ \psi_1\circ f(\R^2,0)
= \psi^{-1}\circ g\circ \phi_1(\R^2,0) =
f\circ\phi^{-1}\circ\phi_1(\R^2,0) = X$. The germ $\psi^{-1}\circ
\psi_1$ is a germ of diffeomorphism which preserves the standard
swallowtail $X$, that is, $\psi^{-1}\circ \psi_1\in \Rl(X)$.
Therefore, the function $h_\nu\circ \psi$ is well defined up to
elements in $\Rl(X)$ (see \cite{BW}).

Following the transversality theorem in the Appendix of \cite{BW},
for a generic swallowtail, the height functions $h_\nu$, for any
$\nu\in S^2$, can only have singularities of
$\Rl_e^+(X)$-codimension $\< 2$ at the origin. Furthermore, as the
height function $h_\nu:(\R^3,0)\rt (\R,0)$ is a submersion, the
function $h_v\circ\psi$ is also a submersion. Therefore
$h_\nu\circ\psi$ is $\Rl(X)$-equivalent to one of the normal forms
given in Theorem \ref{Teo:class}, that is, there exist a germ of
diffeomorphism $\varphi:(\R^3,0)\rt(\R^3,0)$ which preserves the
standard swallowtail $X$ such that $h_\nu\circ\psi =
\widehat{g}\circ\varphi$, where $\widehat{g}$ is one of the normal
forms given in Theorem \ref{Teo:class}. Hence the contact between a
geometric swallowtail $\psi(X)$ and the plane $h_\nu^{-1}(0)$
coincide with the contact of the standard swallowtail $X$ and the
fiber $\widehat{g}^{-1}(0)$ ( which is measured by the singularities
of the function $\widehat{g}\circ f$).

We have the following consequences about the flat geometry of a
generic swallowtail, where tangent/transverse to the swallowtail
(resp. singular curve and double point curve) means
tangent/transverse to its tangent cone (resp. the tangential line).

\begin{proposition}
The possible singularities of\, $\widehat{g}\circ f$ have the
following geometric interpretations:
\begin{itemize}
\item[(i)] $\pm u$ : the corresponding plane is transverse to both the swallowtail, its singular curve and its double point curve;

\item[(ii)] $v + au^2$ : the plane is transverse to the
swallowtail and is in the pencil of planes obtained as limiting
tangents to the double point curve (which coincide with that of the
singular curve);

\item[(iii)] $v + au^3$ : the plane is transverse to the
swallowtail and is in the pencil of planes obtained as limiting
tangents to the singular curve and is the limiting osculating plane
to the double point curve;

\item[(iv)] $\pm w + au^2 + bu^3$: the plane is the tangent cone of the swallowtail.
\end{itemize}
\end{proposition}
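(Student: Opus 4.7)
The plan is to read off the geometric interpretation of each case from the lowest-order jets of $\widehat{g}$. Write $h_\nu\circ\psi=\widehat{g}\circ\ph$ for some $\ph\in\Rl(X)$. Since $\Sigma$ and $\Upsilon$ are respectively the singular locus and the double point locus of $X$, any $\ph\in\Rl(X)$ preserves them setwise; consequently $\ph\circ\a$ and $\ph\circ\b$ are reparametrisations of $\Sigma$ and $\Upsilon$, so the order of contact of $\widehat{g}$ with $\a$ and $\b$ agrees with that of $h_\nu\circ\psi$ with $\psi(\Sigma)$ and $\psi(\Upsilon)$. Moreover, inspecting the linear parts of the generators $\th_1,\th_2,\th_3$ of $\Th(X)$ in Proposition \ref{prop:base}, their matrices in the basis $(\partial_u,\partial_v,\partial_w)$ are upper triangular; hence the tangent cone $\{w=0\}$ and the tangential line (the $u$-axis) are invariant under the $1$-jet action of $\Rl(X)$. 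Transporting by $d\psi_0$, the relative position of the plane $\{j^1\widehat{g}=0\}$ with $\{w=0\}$ and the $u$-axis translates directly into the relative position of $h_\nu^{-1}(0)$ with the tangent cone and tangential line of $\psi(X)$.

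The geometric content of (i), (ii) and (iv) then follows from the $1$-jet of $\widehat{g}$. In (i) the zero set $\{u=0\}$ is transverse to both $\{w=0\}$ and the $u$-axis, giving transversality with the swallowtail, with $\Sigma$ and with $\Upsilon$. In (ii) and (iii) the $1$-jet has zero set $\{v=0\}$, which is transverse to $\{w=0\}$ but contains the $u$-axis, so the corresponding plane lies in the common pencil of tangent planes to $\Sigma$ and $\Upsilon$ (the two curves share the $u$-axis as limiting tangent line). In (iv) the zero set of the $1$-jet is exactly $\{w=0\}$, which coincides with the tangent cone of $X$.

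The main obstacle is separating (ii) from (iii), which share the same $1$-jet so the linear data cannot distinguish ``merely tangent to $\Upsilon$'' from ``osculating $\Upsilon$''. I would resolve it by evaluating $\widehat{g}$ on $\b(t)=(-2t^2,0,t^4)$: in case (ii) one obtains $\widehat{g}\circ\b=4at^4$, which is the generic order of contact in the pencil of tangent planes to $\Upsilon$, while in case (iii) one obtains $\widehat{g}\circ\b=-8at^6$, exhibiting the extra vanishing that characterises the limiting osculating plane of $\Upsilon$. A parallel computation on $\a(t)=(-6t^2,8t^3,-3t^4)$ yields order $3$ in both cases, which is the generic order within the tangent pencil of $\Sigma$ and explains why (iii) asserts only that the plane lies in this pencil, not that it osculates $\Sigma$.
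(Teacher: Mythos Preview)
Your argument is correct and is essentially the same approach as the paper's: the paper's proof consists of the single sentence ``follows from the analysis made in \S 4 for each case,'' and your write-up simply makes that analysis explicit, including the invariance of the tangent cone and tangential line under $\Rl^{(1)}(X)$ and the contact-order computations on $\a$ and $\b$ (which appear in \S 4 as the families $H_1$ and $H_2$, with the $B_2$/$B_3$ distinction on $\Upsilon$ separating (ii) from (iii)). The only cosmetic difference is that in \S 4 the paper phrases the (ii)/(iii) distinction via the contact type of the tangential line, $g(f(x,0))=ax^2$ versus $ax^3$, whereas you phrase it via $\widehat g\circ\b$; both computations are present in \S 4 and lead to the same conclusion.
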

\begin{proof}
The proof follows form the analysis made in \S 4 for each case. \fim

\end{proof}

Consider a generic swallowtail with a parametrisation $g$ and let
$\lb$ and $\g$ be parametrisations of its singular curve and its
double point curve, respectively. For the family of height functions
$H$ we define
\begin{itemize}
\item[] $\mathcal{D}_1(H) = \{(\nu,h_\nu\circ g(x,y))\in S^2\t\R\, ; \displaystyle\frac{\partial h_\nu\circ g}{\partial x} = \frac{\partial h_\nu\circ g}{\partial y} = 0 \ {\rm at} \
(x,y,\nu)\}$;

\item[] $\mathcal{D}_2(H) = \{(\nu,h_\nu\circ \lb(t))\in S^2\t\R\, ; \displaystyle\frac{\partial h_\nu\circ \lb}{\partial t} = 0 \ {\rm at} \
(t,\nu)\}$;

\item[] $\mathcal{D}_3(H) = \{(\nu,h_\nu\circ \g(t))\in S^2\t\R\, ; \displaystyle\frac{\partial h_\nu\circ \g}{\partial t} =  0 \ {\rm at} \
(t,\nu)\}$.
\end{itemize}
The sets $\mathcal{D}_1(H), \mathcal{D}_2(H)$ and $\mathcal{D}_3(H)$
corresponds to the duals of the swallowtail, the singular curve and
the double point curve, respectively.

As discussed in the beginning of \S 5, the contact between a
swallowtail and a plane $h_\nu^{-1}(0)$ is described by that of the
fiber $\widehat{g}=0$ with the standard swallowtail, with
$\widehat{g}$ as in Theorem \ref{Teo:class}. Using this fact we can
show that $\mathcal{D}_i(H)$ is diffeomorphic to $\mathcal{D}_i(F)$,
for $i = 1,2,3$, where $F$ is an $\Rl^+(X)$-versal deformation of
$\widehat{g}$ with $2$-parameters. Therefore, the calculations and
figures in $\S 3.2$ give models, up to diffeomorphisms, of
$\mathcal{D}_i(H)$ for $i=1,2,3$.

\section*{Acknowledgements}

%

The author is supported by the FAPESP doctoral grant 2015/16177-2.

\let\thefootnote\relax\footnotetext{\\ Alex Paulo Francisco\\
Departamento de Matem\'{a}tica, ICMC-Universidade de S\~{a}o Paulo, 13560-970,\\
S\~{a}o Carlos, S\~{a}o Paulo, Brazil,\\
Email: alexpf@usp.br}

\end{document}